\documentclass{amsart}
\usepackage{verbatim,amssymb,amsmath,amscd,latexsym,amsbsy,mathrsfs} 
\usepackage{graphicx}
\usepackage[all]{xy}
 \input{xy}
 \xyoption{all}

\newtheorem{thm}{Theorem}[section]

\newtheorem{prop}[thm]{Proposition}

\newtheorem{lemma}[thm]{Lemma}

\newtheorem{prob}[thm]{Problem}

\DeclareMathOperator*{\im}{im} 
\DeclareMathOperator*{\coker}{coker} 
\DeclareMathOperator*{\rank}{rank} 
\DeclareMathOperator*{\Spec}{Spec}
\DeclareMathOperator*{\bSpec}{\mathbf{Spec}}
\DeclareMathOperator*{\Ind}{Ind }
\DeclareMathOperator*{\Res}{Res }

\newcommand{\PP}{\mathbb{P}}

\newcommand {\C} {{\mathbb C}}

\newcommand {\Z} {{\mathbb Z}}
\newcommand {\Q} {{\mathbb Q}}

\newcommand {\OO} {{\mathcal O}}

 \begin{document}
\title{Simpson's construction of varieties with  many local systems}
\author{
        Donu Arapura    
}
 \thanks {Partially supported by the NSF }
\address{Department of Mathematics\\
 Purdue University\\
 West Lafayette, IN 47907\\
U.S.A.}
 \maketitle

 \begin{center}
   {\em To Steve Zucker}
 \end{center}
One of the goals of this note  is to say something about the fundamental group of a
smooth complex projective variety in terms of the quantity of local systems on
it.  Given a finitely generated group $\Gamma$, let $d_N(\Gamma)$ be the
dimension of the space of irreducible rank $N$ representations.  The number 
$d_1(\Gamma)$ coincides with the first Betti number,
so one may think of $d_N(\Gamma)$ as a
nonabelian generalization. The basic problem is to see how these numbers behave when $\Gamma$ is the
fundamental group of a smooth projective variety $X$. In this case, these numbers
are always even \cite{arapura}.
If $X$ is a curve of genus at least two, or even if it maps onto such a
curve, then $d_N(\Gamma)>0$ for all $N$. If $X$ is an abelian variety,
then $d_1(\Gamma)>0$  but $d_N(\Gamma)=0$ for all $N>0$. I want to consider examples which
 have the opposite behaviour, in that 
 $d_1=0$ but some higher $d_N>0$. Some cheap examples are given in
the first section. However, they are not very interesting in the sense that
they are very close to the examples we already know.
In the second section I will turn to a beautiful construction due
to  Carlos Simpson \cite{simpson}, which also  produces smooth
projective varieties such that  $d_N(\pi_1(X))>0$ for some $N>1$.
In fact, the real purpose of  this article  is to make Simpson's construction a bit more
accessible and explicit, with the hope that these examples will be studied more
 thoroughly in the future. Some specific problems are suggested in the
 last section.

\section{Representation varieties}

For $\Gamma$ a group with generators $g_1,\ldots, g_n$, 
an element of $Hom(\Gamma, GL_N(\C))$ is given by $n$ matrices subject
to the relations of the group. In this way, the set becomes an affine
scheme of finite type, called the representation ``variety''. (For the present purposes,
a scheme will be identified with the set of its closed points.) The
algebraic group $GL_N(\C)$ acts on the representation variety by conjugation,
and the GIT quotient
\begin{equation*}
  \begin{split}
M(\Gamma, N) &= Hom(\Gamma, GL_N(\C))//GL_N(\C)    \\
&:= \Spec \OO( Hom(\Gamma, GL_N(\C)))^{GL_N(\C)}  
  \end{split}
\end{equation*}
can be identified with the set of isomorphism classes of {\em semisimple} representations of
rank $N$ \cite{lm}. This is often called the character variety. Let
$$M(\Gamma, N)^{irred}\subset M(\Gamma, N)$$
denote the possibly empty open subset of irreducible representations.
We have quasifinite (i.e. set theoretically finite to one) morphisms
$$M(\Gamma, N_1)\times M(\Gamma, N_2)\to M(\Gamma, N_1+N_2)$$
given by direct sum. We can decompose
\begin{equation}
  \label{eq:decomp}
M(\Gamma, N) = \bigcup_{N_1+\ldots +N_r=N} \im M(\Gamma,
N_1)^{irred}\times\ldots \times M(\Gamma,N_r)^{irred}
\end{equation}
Let 
$$d_N(\Gamma) = \dim M(\Gamma, N)^{irred}$$
where we take it to be zero if it is empty. From \eqref{eq:decomp}, we
obtain:

\begin{lemma}\label{lemma:decomp}
We have
$$\dim M(\Gamma, N) = \max_{N_1+\ldots +N_r=N}  d_{N_1}(\Gamma)+\ldots
+ d_{N_r}(\Gamma)$$
  Therefore $\dim M(\Gamma, N)>0$ if and only if $d_M(\Gamma)>0$ for some $M\le N$.
\end{lemma}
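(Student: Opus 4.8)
The plan is to read off both assertions from the set-theoretic decomposition \eqref{eq:decomp}, using two standard facts from dimension theory: (i) the dimension of a finite union of constructible subsets of a finite-type $\C$-scheme equals the maximum of the dimensions of the pieces; and (ii) a quasifinite morphism of finite-type $\C$-schemes preserves dimension, so the (constructible) image of such a morphism has the same dimension as its source — on each irreducible component the morphism is generically finite onto its closure, and the fibre-dimension theorem then forces equality.

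First I would fix a composition $N = N_1 + \ldots + N_r$ such that every $M(\Gamma, N_i)^{irred}$ is nonempty. The product $M(\Gamma, N_1)^{irred} \times \cdots \times M(\Gamma, N_r)^{irred}$ is then a nonempty finite-type $\C$-scheme of dimension $d_{N_1}(\Gamma) + \cdots + d_{N_r}(\Gamma)$, by additivity of dimension under products. The direct-sum map on this product is the composite of the (open immersion) product of the inclusions $M(\Gamma, N_i)^{irred} \hookrightarrow M(\Gamma, N_i)$ with the direct-sum morphism $M(\Gamma, N_1) \times \cdots \times M(\Gamma, N_r) \to M(\Gamma, N)$, which is quasifinite by the hypothesis recalled above; hence the map on the product is quasifinite, and by (ii) its image has dimension exactly $d_{N_1}(\Gamma) + \cdots + d_{N_r}(\Gamma)$. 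Since by \eqref{eq:decomp} the scheme $M(\Gamma, N)$ is the union, over the finitely many compositions of $N$, of these images — with a composition having an empty irreducible factor contributing the empty set — fact (i) gives
$$\dim M(\Gamma, N) = \max \bigl( d_{N_1}(\Gamma) + \cdots + d_{N_r}(\Gamma) \bigr),$$
the maximum taken over compositions $N = N_1 + \ldots + N_r$ all of whose parts have nonempty irreducible locus.

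It remains to check that one may enlarge this maximum to range over all compositions of $N$, and this is the only place needing care — I do not expect a genuine obstacle, since the essential input (quasifiniteness of the direct-sum maps) is already available. If $M(\Gamma, N_j)^{irred}$ is empty then $d_{N_j}(\Gamma) = 0$ by the stated convention, and replacing the part $N_j$ by $N_j$ copies of $1$ changes the associated sum by $N_j\, d_1(\Gamma) - d_{N_j}(\Gamma) = N_j\, d_1(\Gamma) \ge 0$; since $M(\Gamma, 1)^{irred} = M(\Gamma, 1) = Hom(\Gamma, \C^{*})$ is always nonempty, iterating this refinement dominates any composition of $N$, without decreasing the associated sum, by one all of whose parts have nonempty irreducible locus. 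This yields the displayed formula of the lemma. The final equivalence is then formal: all $d_M(\Gamma) \ge 0$, so the right-hand side is positive if and only if some part $N_i \le N$ has $d_{N_i}(\Gamma) > 0$; conversely, if $d_M(\Gamma) > 0$ for some $M \le N$, the composition $N = M + 1 + \ldots + 1$ contributes $d_M(\Gamma) + (N - M)\, d_1(\Gamma) \ge d_M(\Gamma) > 0$, so $\dim M(\Gamma, N) > 0$.
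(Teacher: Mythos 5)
Your proof is correct and follows the paper's (implicit) approach: the paper presents the lemma as an immediate consequence of the decomposition \eqref{eq:decomp} with no further argument, and you correctly supply the standard dimension-theoretic facts (dimension of a finite union of constructible sets, invariance of dimension under quasifinite maps) together with the bookkeeping needed to pass from compositions with nonempty irreducible loci to all compositions of $N$.
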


We have $M(\Gamma, 1)=\dim Hom(\Gamma,\C^*)$, therefore $d_1(\Gamma) =
\rank \Gamma/[\Gamma, \Gamma]$.
 For higher $N$,
these numbers are usually very difficult to calculate, although there
are some easy cases. We have 
$d_N(\Gamma)=0$ when $N>1$ and $\Gamma$ is abelian simply because in this case
 there are no irreducible representations of higher rank.  
If $\Gamma$ surjects onto a nonabelian free group then a bit of thought
shows that $d_N(\Gamma) >0$ for all $N$.
This remark applies to   the fundamental group of a smooth
projective curve of genus at least two. 

When $\Gamma=\pi_1(X)$ is the fundamental group of a smooth projective
variety $X$,  Hodge theory tells us that $d_1(\Gamma)=\dim H^1(X)$ is
even.  More generally, nonabelian Hodge theory implies
that $M(\Gamma, N)^{irred}$ carries a quaternionic or
hyperk\"ahler structure, therefore every $d_N(\Gamma)$ is even \cite[thm 3.1]{arapura}.
Here is the example promised in the introduction.

\begin{thm}
  There exists a smooth projective variety $X$ with $d_1(\pi_1(X))=0$
  and $d_N(\pi_1(X))\ge 2d$ for any given $N>1$ and $ d>0$.
\end{thm}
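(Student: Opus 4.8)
The plan is, for given $N>1$ and $d>0$, to build a smooth projective surface $X$ with $q(X)=0$ whose fundamental group surjects onto a cocompact Fuchsian orbifold group carrying a large character variety. I would first record the formal principle that makes this enough. If $\pi_1(X)\onto\Gamma$, write $\Gamma=\pi_1(X)/R$; then $Hom(\Gamma,GL_N(\C))$ is a $GL_N$-stable closed subscheme of $Hom(\pi_1(X),GL_N(\C))$, and a semisimple representation of $\pi_1(X)$ factoring through $\Gamma$ is irreducible precisely when it is irreducible as a representation of $\Gamma$. Passing to GIT quotients (and using that invariants are exact for $GL_N$), $M(\Gamma,N)^{irred}$ is identified with a locally closed subset of $M(\pi_1(X),N)^{irred}$, so $d_N(\pi_1(X))\ge d_N(\Gamma)$. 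Hence it suffices to produce $X$ with $d_1(\pi_1(X))=0$ and a surjection $\pi_1(X)\onto\Gamma$ where $\Gamma=\langle x_1,\dots,x_k\mid x_i^{m_i}=1,\ x_1\cdots x_k=1\rangle$ is the orbifold fundamental group of $\PP^1$ with $k$ cone points, and $d_N(\Gamma)\ge 2d$.

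The next step is to estimate $d_N(\Gamma)$. A representation of $\Gamma$ sends each $x_i$ to a semisimple element with $m_i$-th root of unity eigenvalues; fixing each image in a regular semisimple class $\mathcal C_i$ (which exists once $m_i\ge N$, and has dimension $N^2-N$), a routine parameter count on $\{(A_1,\dots,A_k)\in\mathcal C_1\times\cdots\times\mathcal C_k:A_1\cdots A_k=1\}$ modulo conjugation produces, over the irreducible locus, a piece of $M(\Gamma,N)^{irred}$ of dimension $k(N^2-N)-2(N^2-1)$. This exceeds $2d$ as soon as $k$ is chosen large in terms of $N$ and $d$ (all cone orders may be taken equal to a single prime $m\ge N$). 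The one genuine input is that this locus is nonempty, i.e. that there are irreducible tuples $(A_i)$ with $\prod A_i=1$ and $A_i$ in prescribed root-of-unity classes; for a suitable choice of eigenvalues and $k\gg 0$ this is a (solved) instance of the Deligne--Simpson problem on the $k$-punctured sphere.

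To realize $\Gamma$ geometrically, take a finite group $G$ (for instance $G=(\Z/m)^{k-1}$ with the generating tuple $(e_1,\dots,e_{k-1},-\sum e_i)$) and a surjection $\Gamma\onto G$ sending each $x_i$ to an element of order $m_i$; the associated branched cover is a smooth projective curve $C$ with $C/G=\PP^1$ ramified over $k$ points with indices $m_1,\dots,m_k$, and $\pi_1(C)=\ker(\Gamma\to G)$. Choose a second curve $D$ with a $G$-action and $D/G=\PP^1$, arranged so that no nontrivial element of $G$ has fixed points on both $C$ and $D$ (a Beauville-type configuration); then $G$ acts freely on $C\times D$ and $X:=(C\times D)/G$ is a smooth projective surface isogenous to a product. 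Its irregularity vanishes, since
\[
H^0(X,\Omega^1_X)=H^0(C,\Omega^1_C)^G\oplus H^0(D,\Omega^1_D)^G=H^0(\PP^1,\Omega^1_{\PP^1})^{\oplus 2}=0,
\]
so $b_1(X)=0$ and $d_1(\pi_1(X))=0$. Finally the first projection exhibits $X$ as a fibration over $C/G=\PP^1$ with general fibre $D$ and multiple fibres of multiplicities $m_i$ over the branch points, so the orbifold fibration exact sequence $\pi_1(D)\to\pi_1(X)\to\Gamma\to 1$ yields $\pi_1(X)\onto\Gamma$. Combining with the first two paragraphs gives $d_N(\pi_1(X))\ge d_N(\Gamma)\ge 2d$, as required.

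I expect the main obstacle to lie in the two existence inputs feeding this argument: (i) that for $k\gg 0$, with a good choice of root-of-unity conjugacy classes, there really are irreducible $GL_N$-solutions of the multiplicative Deligne--Simpson problem, so that the parameter count of the second paragraph is attained and $d_N(\Gamma)$ grows without bound in $k$; and (ii) the existence of the Beauville-type pair, i.e. a $G$-curve $D$ over $\PP^1$ whose stabilizer subgroups are disjoint from those occurring on $C$. Both are available in the literature (the first via work of Simpson, Kostov and Crawley--Boevey; the second via Catanese and Bauer--Catanese--Grunewald), but they are the substance of the construction. It is worth noting that the constraint $q(X)=0$ genuinely forces something of this shape: a surjection of $\pi_1(X)$ onto the fundamental group of a curve of positive genus would already give $b_1(X)>0$, so the abundant rank-$N$ local systems cannot be pulled back from a curve and must instead descend from a genuinely orbifold base.
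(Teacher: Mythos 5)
Your argument is sound but takes a substantively different route from the paper. The paper fixes $G=\Z/N\Z$, takes a cyclic cover $C\to\PP^1$ of the form $y^N=f(x)$ with genus $g\geq d$, and sets $X=(C\times Y)/G$ where $Y$ is a Serre simply-connected variety with a free $G$-action; then $d_1(\pi_1(X))=0$ comes out of a Hochschild--Serre computation for $1\to\pi_1(C)\to\pi_1(X)\to G\to 1$, and the family of rank-$N$ irreducibles is produced by \emph{inducing} a generic rank-one character of $\pi_1(C)$ (a $2g$-dimensional family on $(\C^*)^{2g}$) up to $\pi_1(X)$, with Lemma~\ref{lemma:induced} guaranteeing the induced family is nontrivial and quasifinite onto its image. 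You instead realize $X$ as a Beauville-type surface isogenous to a product $(C\times D)/G$, surject $\pi_1(X)$ onto the orbifold group $\Gamma$ of the fibration $X\to\PP^1$, and lower-bound $d_N(\Gamma)$ via a parameter count on the multiplicative Deligne--Simpson problem with root-of-unity conjugacy classes. The two geometries are genuinely different, and notably the paper's cyclic $G$ cannot work in your setup: if $G$ is cyclic and $C/G=\PP^1$, the full group stabilizes the ramification points of $C$, so disjointness of stabilizers would force $G$ to act freely on $D$ and hence $D/G$ to have positive genus, destroying $q(X)=0$. That constraint is what pushes you to a non-cyclic $G$ and then to Deligne--Simpson rather than the paper's elementary induced-character count. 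What you gain is an explicit surface (dimension two) with no recourse to Serre's construction; what the paper gains is a short, self-contained proof needing only Riemann--Hurwitz and Hochschild--Serre, with no appeal to solvability of Deligne--Simpson or to the existence of a Beauville pair. Your two flagged existence inputs are indeed available in the literature, so the argument closes, but they carry real content that the paper's route sidesteps entirely.
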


\begin{proof}
Let $C\to \PP^1$ be a cyclic cover of the form $y^N=
f(x)$, where $f$ has distinct roots. Let $x_0$ denote one of the roots.
 By choosing $\deg f$ sufficiently
large, we can assume that the genus $g$ of $C$ is greater than or
equal to $d$.
The group  $G=\Z/N\Z$ will act on $C$ with   $C/G\cong \PP^1$. 
If follows that $H^1(C,\Q)^G=0$. Consequently, if $\gamma\in G$ denotes a generator,
it will act nontrivially on $H_1(C,\Z)$.
By Serre \cite[prop 15]{serre}, there exists a simply connected variety $Y$ on which
$G$ acts freely. Let $X = (C\times Y)/G$, where $G$ acts
diagonally.  The projection $X\to Y/G$ is a fibration with fibre $C$
and section  given by $y\mapsto (x_0,y)$.
Therefore we have split exact sequence
$$1\to \pi_1(C)\to \pi_1(X) \stackrel{\leftarrow}{\to} G\to 1$$
Using the Hochschild-Serre spectral sequence, we obtain an exact sequence
$$ H^1(G, H^0(\pi_1(C),\Q))\to H^1(\pi_1(X),\Q){\to} H^0(G,
H^1(\pi_1(C),\Q))$$
As noted above, the group on the
right  vanishes.
Since $G$ is finite, the group on the left also vanishes. Therefore $H^1(\pi_1(X), \Q)=0$, which means that
$d_1(\pi_1(X))=0$.

Let $\rho\in Hom(\pi_1(C),\C^*)= (\C^*)^{2g}$ be a one dimensional
character. For a  generic choice of $\rho$, the characters $\rho,
\rho\circ\gamma, \ldots \rho\circ \gamma^{N-1}$ are all distinct.
Let $\C_\rho$ denote the $\C[\pi_1(C)]$-module associated to $\rho$. The
induced representation $V_\rho=\Ind \C_\rho$ gives a rank $N$
$\C[\pi_1(X)]$-module. As an $\C[\pi_1(C)]$-module 
\begin{equation}
  \label{eq:V}
V_{\rho}=\C_\rho\oplus\C_{\rho\circ \gamma}\oplus\ldots  
\end{equation}
and $\gamma$ acts by cyclically permuting the factors.
It follows easily that $V_\rho$ is an irreducible $\pi_1(X)$-module for generic $\rho$.  Also by computing 
characters, using \eqref{eq:V}, we see that $V_\rho\cong V_{\rho'}$  only if $\rho'\in \{\rho, \gamma\rho,\ldots\}$.
Therefore the  map $\rho\mapsto
V_\rho$ is a quasifinite morphism from an open subset of $(\C^*)^{2g}$ to
$M(\pi_1(X), N)^{irred}$. 
Thus  $d_N(\pi_1(X))\ge 2g\ge 2d$.
\end{proof}

The drawback of this method is that it does not produce any
really new examples of fundamental groups of smooth projective
varieties. I will describe a more subtle construct in the next
section, but first I want to  record the following useful fact which was used
implicitly above.

\begin{lemma}\label{lemma:induced}
Suppose that $\Gamma_1\subset \Gamma$ is a subgroup of  index $r<\infty$.
\begin{enumerate}
\item[(a)] If  $W_\rho$ is  a nontrivial (i.e. nonconstant) family of representations in $M(\Gamma, N)$, then
  the restrictions $\Res W_\rho$ give a nontrivial family in
  $M(\Gamma_1,N)$.
\item[(b)] Conversely if  $\Res W_\rho$ is a nontrivial family, then so is $W_\rho$.
\item[(c)] If  $V_\rho$ is  a nontrivial    family of representations in $M(\Gamma_1, N)$, then
  $\Ind V_\rho$ is a nontrivial family in $M(\Gamma,rN)$
\end{enumerate}
\end{lemma}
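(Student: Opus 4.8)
The plan is to reduce all three parts to the single fact that restriction and induction induce \emph{morphisms} of the relevant character varieties which are, moreover, quasifinite: $[W]\mapsto[\Res W]$ is a quasifinite morphism $M(\Gamma,N)\to M(\Gamma_1,N)$, and $[V]\mapsto[\Ind V]$ is a quasifinite morphism $M(\Gamma_1,N)\to M(\Gamma,rN)$. Granting this, (b) is immediate, since a morphism carries constant families to constant families: if $W_\rho$ were trivial then $\Res W_\rho$ would be trivial. For (a) and (c) one invokes the elementary fact that a quasifinite morphism of finite-type $\C$-schemes does not lower the dimension of a constructible set; so if $f\colon S\to M(\Gamma,N)$ is a nonconstant family with $S$ irreducible, the closure of its image is positive-dimensional, its image under $\Res$ is again positive-dimensional, and hence $\Res W_\rho$ is nonconstant. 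The argument for $\Ind$ in (c) is word-for-word the same. (If the base $S$ is merely connected, pass to an irreducible component on which $f$ is nonconstant, or else apply the same reasoning to two distinct values of $f$.)

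Two things then need to be checked. First, that the maps are morphisms. On representation schemes, restriction $Hom(\Gamma, GL_N(\C))\to Hom(\Gamma_1,GL_N(\C))$ (substitute words in the chosen generators) and induction $Hom(\Gamma_1,GL_N(\C))\to Hom(\Gamma, GL_{rN}(\C))$ (assemble the induced matrices from a fixed set of coset representatives) are morphisms, equivariant for the conjugation actions, so each descends to the GIT quotient. That the descended maps are $[W]\mapsto[\Res W]$ and $[V]\mapsto[\Ind V]$ — i.e.\ that the resulting orbit is already closed — uses that restriction to, and induction from, a finite-index subgroup preserve semisimplicity in characteristic zero: for $\Res$ this is Clifford theory, and for $\Ind$ one notes that $\Res\,\Ind V$ is semisimple with $V$ among its summands (Mackey's formula) and that in characteristic zero semisimplicity of a finite-dimensional $\C[\Gamma]$-module is detected after restriction to any finite-index subgroup.

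Second, quasifiniteness. By Frobenius reciprocity, a semisimple $W$ with $\Res W\cong U$ admits a nonzero $\Gamma$-homomorphism $W\to \Ind U$, necessarily a split injection because $W$ is semisimple; hence $W$ is one of the finitely many isomorphism classes of direct summands of the fixed finite-dimensional semisimple module $\Ind U$, so $\Res$ has finite fibres. Dually, if $\Ind V\cong W$ then Mackey's formula realizes $V$ as a $\Gamma_1$-submodule, hence (as $\Res W$ is semisimple) a direct summand, of the fixed module $\Res W$; there are only finitely many such summands of rank $N$, so $\Ind$ has finite fibres.

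The step I expect to require the most care is verifying that $\Res$ and $\Ind$ genuinely descend to morphisms of the GIT quotients — this is precisely where the hypothesis $[\Gamma:\Gamma_1]<\infty$ is essential, through the preservation of semisimplicity. Everything afterwards is bookkeeping with dimensions of images of morphisms; and in the case of induction this is exactly the mechanism used implicitly in the proof of the theorem above.
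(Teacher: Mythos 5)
Your proof is correct, but it takes a genuinely different route from the paper. The paper's proof is two sentences: parts (a) and (b) are cited verbatim from Simpson's Lemma 1.5, and (c) is deduced cheaply from (b) by observing that $\Res(\Ind V_\rho) = V_\rho \oplus \cdots$ contains $V_\rho$ as a direct summand, so nontriviality of $V_\rho$ forces nontriviality of $\Res\Ind V_\rho$ and hence (by (b)) of $\Ind V_\rho$. You instead prove from scratch the stronger, unifying fact that $\Res$ and $\Ind$ descend to quasifinite morphisms of character varieties — descent via preservation of semisimplicity in characteristic zero, quasifiniteness via Frobenius reciprocity and Mackey — and then read off all three parts as consequences of the dimension-preservation of quasifinite morphisms. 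What your approach buys is self-containedness plus extra information: the maps do not merely send nonconstant families to nonconstant families, they actually preserve the dimension of the image, which is in the same spirit as the quasifiniteness of the direct-sum maps the paper records just before Lemma \ref{lemma:decomp}. What the paper's approach buys is brevity, at the cost of outsourcing (a) and (b) to Simpson. One small caveat on your write-up: the parenthetical suggestion to ``apply the same reasoning to two distinct values of $f$'' when $S$ is merely connected does not work as stated, since a finite-to-one map may identify two distinct values; the irreducible-component reduction you also mention is the correct fallback and is all that is needed.
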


\begin{proof}
The first two items are the content of lemma 1.5 of
\cite{simpson}. For (c), we have that $\Res(\Ind V_\rho) =
V_\rho\oplus\ldots$ is nontrivial, so $\Ind V_\rho$ is nontrivial by (b).
\end{proof}

\section{Simpson's construction}

Let $Z$ be a smooth projective variety with  dimension $2n+1\ge 3$ and positive first
Betti number.  Fix an embedding $Z\subset\PP^K$ such that $\OO_Z(1)$ is sufficiently ample
in the sense that it is a high enough power of a given ample
bundle. Sufficient ampleness is needed for the proofs of  proposition \ref{prop:disc} and
theorem \ref{thm:simpson}.
 Let $P\subset \check{\PP}^K$ be a general linear subspace of the dual
 space of dimension $d\ge
2$. Then we can form the incidence variety
$$Y=\{(z,H)\in Z\times P\mid z\in H\}$$
with projections and inclusions labelled as follows
$$
\xymatrix{
 Y\ar[r]^{f}\ar[d]^{\pi}\ar[rd]^{\iota} & Z \\ 
 P & Z\times P\ar[u]_{F}\ar[l]^{\Pi}
}
$$
Denote the fibre of $\pi$ over $t$ by $X_t$. Let $D_1=\{t\in P\mid X_t\text{
  is singular}\}$ be the (reduced) discriminant.  The following
standard fact  is stated in \cite{dl} and various other places.
A proof, assuming sufficient ampleness, can be found in \cite[prop 6.1]{simpson}. 

\begin{prop}\label{prop:disc}
 The discriminant  $D_1$ is an irreducible hypersurface and
  for a generic $2$ dimensional plane $Q\subset P$, the singularities of
  $D_1\cap Q $ are nodes and cusps.
\end{prop}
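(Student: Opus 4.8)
The plan is to reduce the statement to a Lefschetz-pencil / dual-variety computation in projective space. Fix the very ample embedding $Z \subset \PP^K$. The incidence variety of pairs $(z, H)$ with $z \in H$ and $H$ singular along the fibre corresponds, by the standard theory of dual varieties, to the dual variety $Z^* \subset \check\PP^K$: a hyperplane $H$ gives a singular section $Z \cap H$ precisely when $H \in Z^*$, and $H$ has a single node exactly when $H$ is a smooth point of $Z^*$. Since $Z$ is smooth of dimension $2n+1$, it is not linear, and hence (for the sufficiently ample embedding) $Z^*$ is an irreducible hypersurface in $\check\PP^K$. Restricting to the general linear subspace $P \subset \check\PP^K$ of dimension $d \ge 2$, Bertini's theorem gives that $D_1 = Z^* \cap P$ is an irreducible hypersurface in $P$, which is the first assertion. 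Sufficient ampleness is what guarantees that the dual variety is a hypersurface (i.e.\ that $Z$ is not ``dual defective'' in this embedding) and that the generic section behaves well.

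The second assertion, about the singularities of $D_1 \cap Q$ for a generic plane $Q \subset P$, is the heart of the matter. The idea is to stratify $Z^*$ by the type of degeneration of the corresponding hyperplane section: the smooth locus $Z^*_{\mathrm{sm}}$ parametrizes $H$ with a single node; a codimension-one stratum $\Sigma \subset \mathrm{Sing}(Z^*)$ parametrizes $H$ with either two nodes or one cusp ($A_2$) singularity; and everything else sits in codimension $\ge 2$ in $Z^*$, hence codimension $\ge 3$ in $\check\PP^K$. When we cut by the generic $2$-plane $Q$ inside $P$ (itself generic), $Q$ meets $Z^*_{\mathrm{sm}}$ in a smooth curve, meets the codimension-one stratum $\Sigma$ in finitely many points, and misses the higher-codimension strata entirely. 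At a point of $\Sigma$ of ``two-node'' type the curve $D_1 \cap Q$ has a node (two smooth local branches of $D_1$ crossing), and at a point of ``cusp'' type it has a cusp. One then needs the transversality input: a generic plane section of a hypersurface has, at each point, the same analytic singularity type as the ambient hypersurface has along the stratum through that point, provided $Q$ meets the strata transversally — this is where genericity of $Q$ (and of $P$) is used, via a dimension count and the standard theory of Lefschetz pencils / Zariski's theorem on the generic member of a linear system.

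Concretely, I would carry out the steps in this order. First, invoke duality to identify $D_1$ with $Z^* \cap P$ and cite the structure of $Z^*$ for a sufficiently ample embedding (irreducible hypersurface, and the local classification of its singularities coming from the classification of degenerate hyperplane sections). Second, apply Bertini to get irreducibility of $D_1$. Third, set up the stratification of $Z^*$ by singularity type of the section and compute the codimensions of the strata, the key point being that the locus of hyperplane sections worse than ``node, two nodes, or cusp'' has codimension $\ge 2$ in $Z^*$. Fourth, choose $Q$ generic so that $Q \cap P$ meets each relevant stratum transversally and avoids the bad locus; then read off that $D_1 \cap Q$ is a curve with only nodes and cusps.

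The main obstacle is the third and fourth steps: proving that the ``bad'' hyperplane sections really do form a locus of codimension at least two in the dual variety, and that a generic plane inherits exactly the node/cusp singularities. This requires either a careful local analysis of versal deformations of the relevant hypersurface singularities (an $A_1$ point deforms within a codimension-one family to $2A_1$ or $A_2$, and one must check nothing worse appears generically) or, as Simpson does, a direct argument using sufficient ampleness to control the geometry of the multi-jet evaluation maps $Z \to \mathrm{Jets}$ — showing these are sufficiently surjective that only the expected degenerations occur in a generic pencil. I would follow \cite[prop 6.1]{simpson} for this ampleness-based argument rather than reprove the classical dual-variety statements from scratch.
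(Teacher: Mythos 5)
Your outline is correct, and it matches what the cited sources actually do. Note, though, that the paper does not supply its own proof of this proposition: it is stated as a standard fact with pointers to Dolgachev--Libgober and to \cite[prop 6.1]{simpson}, and your sketch (identify $D_1$ with the slice $Z^*\cap P$ of the dual variety, use sufficient ampleness to guarantee $Z^*$ is a hypersurface and to run the jet-transversality argument, stratify $Z^*$ by the singularity type of the hyperplane section with the node locus smooth, the two-node/cusp locus of codimension one, and everything worse of codimension at least two, then cut by a generic $2$-plane to read off nodes and cusps) is precisely the content of those references, as you yourself note in the last paragraph.

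One small caution on the phrasing: the irreducibility and hypersurface-ness of $Z^*$ are not consequences of $Z$ merely being nonlinear; smooth nonlinear varieties can be dual defective. What is used is exactly sufficient ampleness (e.g.\ $\OO_Z(1)$ a high power of an ample line bundle forces the second fundamental form, and more generally the relevant jet evaluation maps, to be nondegenerate), and you do invoke this correctly when it matters. With that reading your proposal is a faithful summary of the argument the paper is deferring to.
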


The next step is to form a double cover branched over $D_1$.
If $g(x_1,\ldots, x_d)=0$ is an affine equation of $D_1$, then the
 cover $y^2=g$ may acquire additional ramification at infinity.
It is better to control this in advance by defining
$$
D =
\begin{cases}
  D_1 & \text{if $\deg D_1$ is even}\\
 D_1+D_2 &\text{otherwise, where $D_2$ is a hyperplane in general position}
\end{cases}
$$
Let $U= P-D$. 
Let $p':X'\to P$ be the double cover branched along $D$.
As a scheme
$$X' = \bSpec\left(\OO_P\oplus \OO_P\left(\frac{\deg D}{2}\right)\right)$$
where the sheaf in parantheses is made into an algebra in the
standard way (cf \cite[p 22]{ev}). This will usually be singular but
the singularities are 
normal local complete intersections.  The singular set
$X'_{sing}\subseteq \Sigma={p'}^{-1}D_{sing}$.
Let $q:X\to X'$ be a desingularization  which is an isomorphism on the
complement of $\Sigma$.  This variety is what we are after. It is very
similar to, although not identical to,   Simpson's construction in \cite[lemma
6.3]{simpson}. The difference is that Simpson's variety is a
branched cover of $P$ of indeterminate degree, on which,  by design,
the local systems $V_\rho$ constructed below extend. This makes 
it simpler  for the purpose of constructing local systems. However, the lack of
explicitness makes it harder to do precise computations.

\begin{thm}\label{thm:main}
  The first Betti number of $X$ is zero. For some $M>1$, $d_M(\pi_1(X))>0$.
\end{thm}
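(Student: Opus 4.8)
The plan is to establish the two assertions separately, using the geometry of the construction together with Lemma \ref{lemma:induced}.

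\medskip

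\textbf{Vanishing of the first Betti number.} First I would analyze $\pi_1(X)$ and $H^1(X,\Q)$ via the tower $X \to X' \to P$. Since $q$ is a desingularization that is an isomorphism over $P - D_{sing}$, and since $D_{sing}$ has high codimension in $P$ (at least two), $X$ and $X'$ have the same fundamental group and the same $H^1$; more precisely, $\pi_1(X) \cong \pi_1(X' - \Sigma) \cong \pi_1(p'^{-1}(U))$, the last being the fundamental group of the complement of the branch locus in the double cover. The double cover $p'^{-1}(U) \to U = P - D$ is an étale $\Z/2$-cover, classified by a surjection $\pi_1(U) \onto \Z/2$. Because $P$ is a general linear subspace of $\check\PP^K$ of dimension $d \ge 2$, the Lefschetz-type theorem gives $\pi_1(P - D) \cong \pi_1(\check\PP^K - (\text{discriminant}))$ for a general such $P$, and in any case $H_1(P - D,\Z)$ is cyclic, generated by a small loop around $D$, with order equal to $\deg D$. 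Since $D$ was arranged to have even degree, $H_1(U,\Z) \cong \Z/(\deg D)$ has a unique index-two quotient, and the double cover is the one associated to it. The first Betti number of an étale double cover of $U$ is then computed from the transfer/Hochschild--Serre sequence, $H^1(p'^{-1}(U),\Q) = H^1(U,\Q) \oplus H^1(U, \Q_{-})$ where $\Q_{-}$ is the sign local system; the first summand vanishes because $H_1(U)$ is finite, and the second vanishes for the same reason (a finite-index subgroup of a group with finite abelianization need not have finite abelianization in general, but here one uses that $\pi_1(U)$ is itself a quotient of the fundamental group of the complement of an irreducible hypersurface, whose abelianization is cyclic, so any index-two subgroup has abelianization that is again computed by a Zariski-van Kampen presentation and turns out to be finite — this is exactly the point where sufficient ampleness and the nodes-and-cusps structure from Proposition \ref{prop:disc} enter). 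I expect this to be the main obstacle: proving that the double cover $p'^{-1}(U)$, equivalently $X$, has vanishing first Betti number requires a genuine computation of $H^1$ of an étale double cover of the complement of a nodal-cuspidal curve in a plane $Q$, and then a Lefschetz argument to pass from $Q$ to $P$. The cusps are essential — a purely nodal discriminant would give an abelian $\pi_1(U)$ and the cover could have positive $b_1$ — so the cuspidal behaviour guaranteed by Proposition \ref{prop:disc}, a consequence of sufficient ampleness, is what forces $b_1(X) = 0$.

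\medskip

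\textbf{Existence of positive-dimensional moduli of irreducibles.} For the second assertion I would exploit the fibration-like structure of $\iota \colon Y \to Z \times P$, or rather the monodromy of the family $\pi \colon Y \to P$ of hyperplane sections $X_t$ of $Z$. Since $Z$ has positive first Betti number, $H^1(Z,\C) \ne 0$, and by the Lefschetz hyperplane theorem the restriction $H^1(Z,\C) \to H^1(X_t,\C)$ is an isomorphism (here $\dim Z = 2n+1 \ge 3$, so a smooth hyperplane section has dimension $\ge 2$ and Lefschetz applies). Thus each smooth fibre $X_t$ carries a nonzero space of rank-one local systems coming from $Z$, and these are monodromy-invariant — they are pulled back from $Z$ — so they give a positive-dimensional family of rank-one characters of $\pi_1(X_t)$ that extends over the total space $Y$ restricted to $U$. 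Now $f^{-1}(\text{smooth locus}) \to P$ and the double cover $X' \to P$ fit together: pulling the universal family $Y \to P$ back along $p'$ gives a family of hyperplane sections over $X'$ (or over $p'^{-1}(U)$), and restricting a nontrivial family of rank-one characters of the fibre to the subgroup $\pi_1(X_t) \subset \pi_1(p'^{-1}(U))$ — wait, the correct direction is the other one — one uses the surjection $\pi_1(p'^{-1}(U)) \to \pi_1(U) \to \Z/2$ and Lefschetz to see that $\pi_1$ of a fibre surjects onto a finite-index subgroup, then applies induction: starting from the nontrivial family of rank-one local systems on (a finite cover of) $X_t$ pulled from $Z$, Lemma \ref{lemma:induced}(c) produces a nontrivial family of rank-$r$ representations of $\pi_1(X)$, where $r$ is the relevant index, hence $d_M(\pi_1(X)) > 0$ for $M = r > 1$. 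The point that $M > 1$ strictly, rather than $M = 1$, is automatic because $d_1(\pi_1(X)) = 0$ by the first part, so any positive-dimensional family of semisimple representations must occur in some rank $M \ge 2$ by Lemma \ref{lemma:decomp}; thus it suffices to produce \emph{any} nonconstant family of semisimple representations of $\pi_1(X)$ at all.

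\medskip

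\textbf{Summary of the strategy.} So the logical skeleton is: (1) compute $\pi_1(X)$ as the fundamental group of the double cover $p'^{-1}(U)$ of $U = P - D$; (2) use Proposition \ref{prop:disc} plus a Zariski--van Kampen / Lefschetz argument to show $H_1(X,\Q) = 0$, which is the hard step and the whole reason for the nodes-and-cusps hypothesis; (3) use the positive first Betti number of $Z$ and the Lefschetz hyperplane theorem to produce a nonconstant family of rank-one characters on the generic hyperplane section $X_t$ that is monodromy-equivariant, hence defined on a finite cover of $X$; (4) induce this family up via Lemma \ref{lemma:induced}(c) to get a nonconstant family of representations of $\pi_1(X)$, and conclude from Lemma \ref{lemma:decomp} together with step (2) that $d_M(\pi_1(X)) > 0$ for some $M > 1$. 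I anticipate that making step (3)--(4) precise — identifying exactly which finite cover of $X$ the characters live on, and checking that the induced family is genuinely nonconstant rather than collapsing — will require the explicit description of $X$ as a branched double cover, and that step (2) is where all the real work lies.
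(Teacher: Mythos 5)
Your proof of the first assertion takes a genuinely different route from the paper: you argue topologically, reducing $b_1(X)$ to the $H^1$ of the \'etale double cover $p'^{-1}(U)\to U$ and then trying to compute it by Zariski--van Kampen and Hochschild--Serre. The paper instead works algebro-geometrically: it reduces to $H^1(X,\OO_X)=0$ by Hodge theory, handles the base case $d=2$ via the fact that $A_1$ and $A_2$ singularities are rational (so $H^1(X,\OO_X)=H^1(X',\OO_{X'})$, and $X'$ is a double cover of $\PP^2$), and then does induction on $d$ using Kawamata--Viehweg vanishing. Your reduction $H^1(X,\Q)\into H^1(p'^{-1}(U),\Q)$ is sound, but you correctly identify the residual step (vanishing of $H^1(U,\Q_-)$) as hard and you do not carry it out; the paper's route avoids this entirely. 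Note also that your claimed chain $\pi_1(X)\cong\pi_1(X'-\Sigma)\cong\pi_1(p'^{-1}(U))$ is false as stated: $\pi_1(X')$ is a \emph{quotient} of $\pi_1(X'-\Sigma)$ by the normal closure of the link groups $\pi_1(L_p)$, and $p'^{-1}(U)\subsetneq X'-\Sigma$ so the second map is only a surjection. These distinctions are harmless for the $b_1$ argument but fatal later.

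For the second assertion your argument does not work, and it misses the key ideas of the construction. The family of local systems on $X$ does \emph{not} come from rank-one characters of the hyperplane sections $X_t$ induced up; indeed there is no finite-index inclusion $\pi_1(X_t)\subset\pi_1(X)$, since $X$ is a branched cover of $P$ while $X_t$ is a fibre of $Y\to P$, and these are unrelated as subvarieties. The construction instead takes, for each generic rank-one local system $\C_\rho$ on $Z$, the local system $V_\rho$ on $U$ whose stalk over $t$ is the \emph{primitive middle cohomology} $H^n_{prim}(X_t,\C_\rho)$; this has rank $N>1$, and the heart of the matter is Simpson's Theorem~\ref{thm:simpson} asserting that $V_\rho$ gives a nonconstant family in $M(\pi_1(U),N)$, proved by nonabelian Hodge theory and variation of spectral varieties. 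Your proposal never invokes this. You also omit the two mechanisms by which one descends from $\pi_1(U)$ to $\pi_1(X)$: the Picard--Lefschetz computation $R_\rho(\gamma_1)^2=I$, which is what makes the pullback extend across the smooth part of the branch locus to $X'-\Sigma$ (and is the whole reason for taking a \emph{double} cover); and the residual-finiteness argument (Mal\v{c}ev, applied after cutting to a plane section $X'_Q$ via the Lefschetz theorem for local complete intersections) that passes a finite-index restriction of $R_\rho$ from $\pi_1(X'_Q-\Sigma)$ down to $\pi_1(X')$, killing the finite link groups. Only after all of that does Lemma~\ref{lemma:induced}(c) enter, to induce from the finite-index subgroup $\Psi\subseteq\pi_1(X')$ back up. Your final remark that $M>1$ is automatic from $d_1=0$ and Lemma~\ref{lemma:decomp} is correct and matches the paper, but it rests on a Part 2 that, as written, does not produce any nonconstant family of representations of $\pi_1(X)$.
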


The rest of this section will be devoted to the proof of this theorem.

\begin{prop}\label{prop:betti}
  The first Betti number of $X$ is zero. 
\end{prop}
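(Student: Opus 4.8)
The plan is to compute $H^1(X,\Q)$ via the desingularization $q:X\to X'$ and the double cover $p':X'\to P$, reducing everything to the topology of $P$ and the branch divisor $D$. First I would note that since $q$ is a proper birational morphism of varieties with $X$ smooth and $X'$ having only normal local complete intersection singularities (in particular $X'$ is normal), the map $q_*:H^1(X,\Q)\to H^1(X',\Q)$ is an isomorphism. Indeed, normality of $X'$ gives $\pi_1(X)\onto\pi_1(X')$ (a resolution of a normal variety induces a surjection on $\pi_1$), and since $q$ is an isomorphism away from the codimension $\ge 2$ set $\Sigma$, blowing up changes $H^1$ only through exceptional divisors, which here contribute nothing to $H^1$ because the local fundamental groups of the lci singularities of $X'$ are generated by loops that already die in $\pi_1(X')$; alternatively one can use that $Rq_*\Q_X$ agrees with $\Q_{X'}$ in degrees $\le 1$ since the fibres of $q$ over the singular locus are connected. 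So it suffices to show $b_1(X')=0$.

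Next I would analyze $X'=\bSpec(\OO_P\oplus\OO_P(\tfrac{\deg D}{2}))$, the double cover branched along $D$. Working rationally, the involution $\sigma$ of $X'\to P$ splits $p'_*\Q_{X'}\otimes\Q = \Q_P\oplus\mathcal{L}$ into $\sigma$-invariants and anti-invariants, so $H^1(X',\Q) = H^1(P,\Q)\oplus H^1(P,\mathcal{L})$, where $\mathcal{L}$ is the rank-one local system on $U=P-D$ (extended appropriately) corresponding to the index-two subgroup $\pi_1(X'|_U)\subset\pi_1(U)$. Since $P\cong\PP^d$ is simply connected, $H^1(P,\Q)=0$, so the whole question becomes: does the anti-invariant part $H^1(P,\mathcal{L})$ vanish? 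Equivalently, is there no nonconstant function on $X'$ lifting a suitable object from $P$ — concretely, $b_1(X')=0$ iff the double cover $X'$ has no irregularity coming from the branch locus, which is controlled by $H^1(U,\mathcal{L}|_U)$ together with the behaviour at $D$.

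The main obstacle, and the crux of the argument, is showing $H^1$ of the rank-one anti-invariant local system vanishes. For this I would invoke the sufficient ampleness of $\OO_Z(1)$: by a Lefschetz-type argument, $P\subset\check\PP^K$ being a general linear subspace of dimension $d\ge 2$ forces $\pi_1(U)=\pi_1(P-D)$ to be essentially as simple as possible — in fact, since $D$ is irreducible (Proposition \ref{prop:disc}), $\pi_1(U)$ is generated by a single small loop $\mu$ around $D$, and its abelianization is a quotient of $\Z$ generated by the class of $\mu$. The double cover corresponds to $\mu\mapsto -1$. Then $H^1(U,\mathcal{L}|_U)$ is governed by the monodromy and, crucially, by Proposition \ref{prop:disc} the only singularities of $D$ along a generic plane are nodes and cusps, so a Zariski–van Kampen computation on a generic pencil of lines (or the Zariski theorem on the fundamental group of complements of nodal-cuspidal curves, giving an abelian or small nonabelian $\pi_1$) pins down $\pi_1(U)$ and shows the cyclic double cover is unramified-in-codimension-one over a simply connected-enough base, forcing $b_1(X')=0$. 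I expect the delicate point to be handling the behaviour of the extension of $\mathcal{L}$ across $D$ and at the auxiliary hyperplane $D_2$ (when $\deg D_1$ is odd), i.e. checking that no extra $H^1$ is created at infinity; the even/odd case split in the definition of $D$ is precisely engineered so that $X'\to P$ has no unexpected ramification, which is what makes the vanishing go through.
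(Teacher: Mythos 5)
Your approach is genuinely different from the paper's. The paper works with coherent cohomology: by Hodge theory $b_1(X)=0$ is equivalent to $H^1(X,\OO_X)=0$; since $X'$ has rational singularities, $H^1(X,\OO_X)=H^1(X',\OO_{X'})=H^1(P,\OO_P)\oplus H^1(P,\OO_P(-\tfrac{\deg D}{2}))=0$ in the base case $d=2$, and the case $d>2$ is handled by a hyperplane restriction plus the Kawamata--Viehweg vanishing theorem. You instead try to compute $H^1(X,\Q)$ topologically via the $\Z/2$-equivariant decomposition $p'_*\Q_{X'}=\Q_P\oplus\mathcal{L}$.

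Your first two steps are essentially sound, though step one has a loose end for $d>2$: the identification $H^1(X,\Q)\cong H^1(X',\Q)$ needs $R^1q_*\Q_X=0$, i.e.\ the exceptional fibres of the resolution to have vanishing $H^1$; this is easy for the $d=2$ case ($A_1,A_2$ surface singularities resolve to trees of $\PP^1$'s) but you do not address the positive-dimensional singular locus arising when $d>2$. The decomposition $H^1(X',\Q)=H^1(P,\Q)\oplus H^1(P,\mathcal{L})$ is correct since $p'$ is finite.

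However the crux step, showing $H^1(P,\mathcal{L})=0$, contains a genuine gap and a genuine error. The sentence ``the cyclic double cover is unramified-in-codimension-one over a simply connected-enough base'' is simply false: the double cover is ramified in codimension one, precisely along $D$. And the Zariski--van Kampen route is a dead end: by the Lefschetz theorem $\pi_1(U)$ is isomorphic to the fundamental group of the complement of a nodal-cuspidal plane curve, which can be genuinely nonabelian (Zariski's example $\Z/2\ast\Z/3$), so it is not ``essentially as simple as possible,'' and knowing $\pi_1(U)$ abstractly does not directly control $H^1(P,\mathcal{L})$. What actually does the job is the observation that $\mathcal{L}=j_!(\mathcal{L}|_U)$ (the anti-invariant stalk vanishes along $D$ since $p'$ is one-to-one over the branch locus), whence $H^1(P,\mathcal{L})=H^1_c(U,\mathcal{L}|_U)$; since $U\subset\PP^d$ is affine of complex dimension $d\ge 2$, the affine Lefschetz/Artin vanishing theorem gives $H^{2d-1}(U,\mathcal{L}|_U)=0$ and hence $H^1_c(U,\mathcal{L}|_U)=0$ by Poincar\'e duality. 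With that substitution your plan would close, but as written the key vanishing is unjustified and the proposed mechanism for it is wrong. The paper's argument avoids all of this by never leaving the coherent world, where the computation is immediate.
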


\begin{proof}
By Hodge theory, the proposition  is equivalent to $H^1(X,\OO_X)=0$.
We prove the last equation by induction on $d$ starting with $d=2$. In this case,  
$\Sigma$ consists of a finite set of
singular points.   The local analytic germ of $X'$ at  $p\in \Sigma$ is either  of the form $y^2=x_1x_2$ or
$y^2=x_1^2-x_2^3$. These are the  well known singularities of type
$A_n$  for $n=1,2$ \cite{durfee}.
These are rational singularities which implies that $H^1(X,\OO_X)=
H^1(X',\OO_{X'})$.   The last group
$$H^1(X',\OO_{X'})\cong H^1(\PP^2, \OO_{\PP^2})\oplus
H^1(\PP^2,\OO_{\PP^2}(\deg D/2))=0$$

For $d>2$, choose a general hyperplane $H\subset P$. By the Bertini,
$G=p^{-1}H$ is smooth. By induction, we can assume that
$H^1(G,\OO_G)=0$.
 We have an exact sequence
$$H^1(X,\OO_X(-G))\to H^1(X,\OO_X)\to H^1(X,\OO_G)=0$$
The first group $H^1(X,\OO_X(-G))= H^{d-1}(X,\omega_X(G))$
is zero  by the Kawamata-Viehweg vanishing theorem \cite[p 49]{ev}.

\end{proof}

We turn to the second part of theorem. 
By assumption $Z$ carries a positive dimensional family of rank one
local systems. Fix a generic such system $\C_\rho$, and 
consider the sheaf 
$$V_\rho= \coker( R^n\Pi_*(F^*\C_\rho)\stackrel{\iota^*}{\to} R^n\pi_*(f^*\C_\rho))|_U$$
 This is a local
system of some rank $N>1$. The stalk of $V_\rho$ over $t$ is the
primitive $n$th cohomology of $X_t$ with coefficients in $\C_\rho$.
The rank $N$ is just the dimension of this space. Let
$R_\rho:\pi_1(U)\to GL_N\C$ denote
the representation corresponding to $V_\rho$.

\begin{thm}[Simpson {\cite[ thm 5.1]{simpson}}]\label{thm:simpson}
 As $\rho$ varies, $V_\rho$ gives a nontrivial family in
  $M(\pi_1(U), N)$.
\end{thm}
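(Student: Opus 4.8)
The plan is to exhibit, for a suitable $\rho$, a subvariety of $M(\pi_1(U),N)$ along which the $V_\rho$ vary in an essential way, by pulling the whole construction back to a generic $2$-plane $Q \subset P$ and using the monodromy of the Lefschetz pencil of curves obtained there. Restricting the incidence variety $Y$ over $Q$ gives a family of smooth projective curves $X_t$ over $U \cap Q$, with a finite set of degenerate fibres corresponding to the nodes of $D_1 \cap Q$ (Proposition~\ref{prop:disc} lets us ignore the cusps for this purpose, or treat them separately). The local system $V_\rho$ restricted to $U \cap Q$ is then the primitive $H^n$ (here $n$ in the ambient picture, but $H^1$ on each curve) with coefficients in $\C_\rho$, and I would first reduce the whole statement to showing that this restricted family is nontrivial: by Lemma~\ref{lemma:induced}(b) applied to $\pi_1(U\cap Q)\subset \pi_1(U)$ — or more precisely by the fact that a nonconstant family on a subvariety forces a nonconstant family upstairs — nontriviality over $Q$ implies nontriviality over $P$.

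Next I would set up the variation-of-$\rho$ mechanism. Fix a small disc worth of characters $\rho_s$ of $\pi_1(Z)$ (possible since $b_1(Z)>0$) and ask how the monodromy representation $R_{\rho_s}:\pi_1(U\cap Q)\to GL_N(\C)$ depends on $s$. The key structural input is that the restriction of $\C_{\rho_s}$ to a generic fibre curve $X_t$ is itself a nonconstant family of rank-one local systems on $X_t$ — this uses sufficient ampleness of $\OO_Z(1)$, which guarantees that $H^1(X_t)\to H^1(Z)$ (or the relevant Gysin-type map) is nonzero, so that the characters $\rho_s|_{X_t}$ genuinely move in $\mathrm{Hom}(\pi_1(X_t),\C^*)$. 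Granting that, on each fibre the cohomology $H^1(X_t,\C_{\rho_s})$ has dimension jumping or at least varying in a way that is detected by the variation of Hodge structure / the Gauss–Manin connection on the total family over $U\cap Q$. I would then argue that if the family $\{V_{\rho_s}\}$ were constant in $M(\pi_1(U\cap Q),N)$, i.e.\ all the $R_{\rho_s}$ were simultaneously conjugate, then the induced bundle with its Gauss–Manin connection would be rigid, contradicting the nontrivial period map of the fibrewise characters.

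The cleanest way to make that contradiction precise, and the step I expect to be the main obstacle, is to identify an invariant of $R_{\rho_s}$ that is manifestly nonconstant in $s$. Two candidates: (i) the trace functions $\gamma\mapsto \mathrm{tr}\,R_{\rho_s}(\gamma)$ for $\gamma$ a vanishing cycle or a product of Dehn twists around the nodes of $D_1\cap Q$ — by the Picard–Lefschetz formula these traces are explicit polynomial-in-$s$ expressions once one knows how $\rho_s$ pairs with the vanishing cycles, and one must show this pairing is nonconstant; (ii) the conjugacy class of the local monodromy around a single node, which by Picard–Lefschetz is a pseudo-reflection whose nontrivial eigenvalue is a root of unity twisted by the $\rho_s$-value on the vanishing cycle, hence varies with $s$. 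Option (ii) is probably the most robust: one shows the local monodromy of $V_{\rho_s}$ around a component of $D_{sing}\cap Q$ is a complex reflection with determinant an explicit nonconstant function of $s$, so the family of global representations cannot be constant. The genuine work is checking that the vanishing-cycle classes are not annihilated by the varying characters — again an ampleness argument — and controlling that the cusps (and the auxiliary divisor $D_2$, and the desingularization $q$) do not interfere, since Theorem~\ref{thm:simpson} is about $\pi_1(U)$ and $U$ is an open subset of $P$ on which all of this is transparent. Once nontriviality over $Q$ is in hand, Lemma~\ref{lemma:induced} and the reduction above finish the proof.
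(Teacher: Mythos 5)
The paper does not actually prove this theorem. It is Simpson's result \cite[thm 5.1]{simpson}, and the author explicitly writes, right after the statement, that ``the proof is rather involved, so we will be content to make a few brief comments about it.'' The summary given is that the proof uses nonabelian Hodge theory: one passes from the local systems $V_\rho$ to the corresponding Higgs bundles, views them as sheaves on the cotangent bundle, and shows that their supports (spectral varieties) move as $\rho$ moves. Only in the special case that $Z$ is an abelian variety does Simpson have a more elementary argument. Your proposal is therefore not a variant of the paper's reasoning; it is a sketch of a genuinely different route, and a much more elementary one than anything the paper (or Simpson's cited proof) takes.

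Unfortunately your central mechanism is wrong. In option~(ii) you claim the local monodromy around a node of $D_1\cap Q$ is a pseudo-reflection ``whose nontrivial eigenvalue is a root of unity twisted by the $\rho_s$-value on the vanishing cycle, hence varies with $s$.'' This is contradicted by the paper's own equation~\eqref{eq:Rgamma1}, $R_\rho(\gamma_1)^2=I$, which holds for \emph{every} $\rho$ and is itself just the Picard--Lefschetz formula: the transformation around a generic point of the discriminant is an involution with eigenvalues $\pm 1$. The reason it cannot see $\rho$ is local: the vanishing cycle lives in an arbitrarily small neighborhood of the nodal point, where $\C_\rho$ is a trivial local system, so the Picard--Lefschetz reflection (and hence its conjugacy class) is the same for all $\rho$. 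Option~(i) is more in the right spirit --- one would want to detect $\rho$ via traces of \emph{products} of monodromies, where intersection pairings get twisted by $\rho$-holonomy along paths joining distinct vanishing cycles --- but you don't exhibit a concrete group element whose trace is provably nonconstant, and you concede as much (``one must show this pairing is nonconstant''). That step is exactly the hard part, and it is what Simpson's Higgs-bundle/spectral-variety argument is engineered to carry out. The reduction to the generic $2$-plane $Q$ at the start is fine (nontriviality downstairs forces nontriviality upstairs), though as you yourself note it is by a direct restriction argument rather than by Lemma~\ref{lemma:induced}, since $\pi_1(U\cap Q)\to\pi_1(U)$ is a surjection, not a finite-index inclusion.
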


The proof is rather involved, so we will be content to make a few brief
comments about it. The key ingredient is
 nonabelian Hodge theory,  which sets up a correspondence between
semisimple local systems and certain Higgs bundles, which for our
purposes can be viewed as sheaves on the cotangent bundle.
Simpson then checks that as the $\rho$ vary, the supports of the Higgs bundles
corresponding to $V_\rho$, called spectral varieties, also vary  nontrivially.  When $Z$ is an abelian variety,
there is a more elementary argument which avoids Higgs bundles \cite[p
358]{simpson}, and this   already suffices for constructing nontrivial examples.

Let $\gamma_1$ be a loop going once around a smooth point $D_1$. 
This involves a choice, but any
two choices are conjugate because $D_1$ is irreducible.  We have
\begin{equation}
  \label{eq:Rgamma1}
R_\rho(\gamma_1)^2=I  
\end{equation}
by the Picard-Lefschetz formula or see \cite[lemma 6.5]{simpson}.
Let $\gamma_2$ be a loop around $D_2$ when it exists.  Then
\begin{equation}
  \label{eq:Rgamma2}
R_\rho(\gamma_2)=I  
\end{equation}
 because $V_\rho$ extends to a local system on
$P-D_1$. Let $p=p'\circ q$ and $\tilde U=p^{-1}U$. We can identify $\tilde
U={p'}^{-1}U\subset X'$. This  is an \'etale double cover of $U$
corresponding to an index two subgroup $\pi_1(\tilde U)\subset
\pi_1(U)$. This subgroup contains $\gamma_i^2$. We
can identify $\pi_1(X'-\Sigma)$ with the quotient of $\pi_1(\tilde U)$
by the normal subgroup generated by the $\gamma_i^2$. Combining this with
\eqref{eq:Rgamma1} and \eqref{eq:Rgamma2} yields

\begin{lemma}
  The pullback of the local system ${p'}^*V_\rho$ extends to $X'-\Sigma$.
\end{lemma}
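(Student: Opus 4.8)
The plan is to make precise the identification of $\pi_1(X'-\Sigma)$ already announced in the paragraph preceding the lemma, and then observe that the representation $R_\rho$, pulled back to $\pi_1(\tilde U)$, descends through the relevant quotient. First I would fix a point of $\tilde U$ and identify $\pi_1(\tilde U)$ with the index two subgroup of $\pi_1(U)$ determined by the \'etale double cover $\tilde U\to U$; under this identification the elements $\gamma_i^2$ lie in $\pi_1(\tilde U)$, since $\gamma_i$ reverses the two sheets of a branched double cover branched along $D$, so its square is a loop downstairs whose lift is closed. Next I would recall why $\pi_1(X'-\Sigma)$ is the quotient of $\pi_1(\tilde U)$ by the normal subgroup $\langle\!\langle \gamma_i^2\rangle\!\rangle$ generated by these squares: passing from $\tilde U={p'}^{-1}U$ to $X'-\Sigma$ fills in the ramification locus ${p'}^{-1}(D-D_{sing})\setminus$(nothing) over the smooth part of $D$, and adding back a smooth divisor to a quasi-projective variety kills exactly the classes of the meridian loops around it in $\pi_1$ — here those meridians are precisely the $\gamma_i^2$ (a meridian downstairs around $D_i$ is $\gamma_i$, which lifts to a path between the two sheets; going around twice gives the meridian of the ramification divisor upstairs).

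Granting that identification, the lemma is immediate: the restriction of $R_\rho$ to $\pi_1(\tilde U)$ is the monodromy of ${p'}^*V_\rho$ on $\tilde U$, and by \eqref{eq:Rgamma1} we have $R_\rho(\gamma_1)^2=I$, while by \eqref{eq:Rgamma2} (when $D_2$ occurs) $R_\rho(\gamma_2)=I$, so a fortiori $R_\rho(\gamma_2^2)=I$. Hence $R_\rho|_{\pi_1(\tilde U)}$ is trivial on every $\gamma_i^2$, and since triviality on a set of elements is preserved under taking the normal closure (the kernel of a homomorphism is a normal subgroup), it factors through $\pi_1(\tilde U)/\langle\!\langle\gamma_i^2\rangle\!\rangle=\pi_1(X'-\Sigma)$. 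A representation of $\pi_1(X'-\Sigma)$ is the same as a local system on $X'-\Sigma$, and by construction this local system restricts to ${p'}^*V_\rho$ on $\tilde U\subset X'-\Sigma$; that is the desired extension.

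The one genuine point requiring care — the step I expect to be the main obstacle — is the van Kampen/branched-cover computation that $\pi_1(X'-\Sigma)=\pi_1(\tilde U)/\langle\!\langle\gamma_i^2\rangle\!\rangle$, and in particular the claim that $\gamma_i^2$ is the correct meridian class of the ramification divisor upstairs rather than, say, $\gamma_i$ itself or some power. This needs the local model of a double cover branched along a smooth divisor ($y^2=x_1$ in suitable coordinates), from which one reads off that a loop encircling the branch divisor once downstairs lifts to a half-loop, so its square is the loop encircling the ramification divisor once upstairs; one then invokes the standard fact that for a smooth divisor $E$ in a smooth (or here, mildly singular away from $E$) quasi-projective variety $W$, $\pi_1(W\setminus E)\to\pi_1(W)$ is surjective with kernel normally generated by the meridian of $E$. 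Since $D_{sing}$ is removed on both sides and $X'$ is smooth along ${p'}^{-1}(D\setminus D_{sing})$ (the singularities of $X'$ lie over $D_{sing}$), this applies divisor-component by divisor-component, giving exactly the stated normal subgroup. Everything else is formal.
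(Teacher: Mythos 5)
Your proposal is correct and follows exactly the route the paper takes: the preceding paragraph in the paper asserts the identification $\pi_1(X'-\Sigma)=\pi_1(\tilde U)/\langle\!\langle\gamma_i^2\rangle\!\rangle$ and then combines it with \eqref{eq:Rgamma1} and \eqref{eq:Rgamma2} to get the lemma, which is precisely your argument. Your extra discussion of why $\gamma_i^2$ (and not $\gamma_i$) is the meridian of the ramification divisor upstairs, via the local model $y^2=x_1$, correctly supplies the topological detail the paper leaves implicit.
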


Let $X'_Q =  X'\cap {p'}^*Q$ where $Q\subset P$ is a general
   $2$-plane.  

   \begin{lemma}
     $\pi_1(X')\cong
   \pi_1(X'_Q)$ 
   \end{lemma}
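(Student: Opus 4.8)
The plan is to deduce the isomorphism $\pi_1(X')\cong\pi_1(X'_Q)$ from a Lefschetz-type theorem for fundamental groups, applied to the linear system cutting out $Q$ inside $P$ and pulled back to $X'$. The key point is that $X'\to P$ is a finite (double) cover, so that pulling back a general $2$-plane $Q\subset P$ produces a general member of a base-point-free linear system on $X'$, and one wants to invoke the Goresky--MacPherson / Hamm--L\^e Lefschetz hyperplane theorem in the singular setting. Concretely, I would first reduce to the case of a generic pencil: write $X'_Q$ as obtained from $X'$ by taking $d-2$ general hyperplane sections (pulled back from $P$), so by induction on $d-2$ it suffices to prove that a single general such section $X'_H := X'\cap {p'}^{-1}H$ satisfies $\pi_1(X')\cong\pi_1(X'_H)$, and then iterate down to a $2$-plane.

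Next I would justify the single-section step. The linear system $|{p'}^*\OO_P(1)|$ on $X'$ is base-point-free (even very ample on the affine pieces, and at worst base-point-free because $\OO_P(1)$ is), and ${p'}^*H$ for general $H$ is an ample divisor on the projective variety $X'$; since $\dim X' = d \ge 3$ in this inductive range, the relevant Lefschetz theorem for quasi-projective or singular projective varieties (Hamm--L\^e, or the version in Goresky--MacPherson's stratified Morse theory book, SMT Part II) gives that $\pi_1(X'_H)\to\pi_1(X')$ is an isomorphism once the complementary dimension is at least $2$, i.e. whenever $\dim X'_H = \dim X' - 1 \ge 2$. The singularities of $X'$ are normal local complete intersections (as noted in the construction), hence $X'$ is a local complete intersection and in particular has no small resolutions obstructing the Lefschetz statement; one checks that a general $H$ misses the (small-dimensional) singular locus $\Sigma$ when $\dim\Sigma$ is small, or more robustly one applies the stratified Lefschetz theorem to the Whitney stratification of $X'$ with strata $X'_{reg}$ and the components of $X'_{sing}$, noting that all strata have dimension $\ge$ something keeping the connectivity bound in force.

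The main obstacle I anticipate is bookkeeping the dimension hypotheses in the stratified Lefschetz theorem so that the iteration terminates exactly at a $2$-plane and not before: the Lefschetz isomorphism on $\pi_1$ requires the section to have dimension $\ge 2$, which is precisely why one stops at $X'_Q$ with $\dim X'_Q = 2$ rather than passing to a curve. I would also need to confirm that at each stage the intermediate section $X'\cap{p'}^{-1}(\text{general linear subspace})$ remains a variety to which the theorem applies — irreducible, of the expected dimension, with controlled (local complete intersection) singularities — which follows from Bertini-type arguments exactly as in the proof of Proposition~\ref{prop:betti}, where general hyperplane sections of $X'$ were already used. Given that $Q\subset P$ is taken general, transversality to the finitely many strata is automatic, so the only real content is the citation of the singular Lefschetz theorem plus the elementary Bertini irreducibility statements; I would not expect to need anything about the double-cover structure beyond the fact that ${p'}$ is finite, so that ampleness and base-point-freeness descend from $P$ to $X'$.
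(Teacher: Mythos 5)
Your proposal is correct and takes essentially the same route as the paper: the paper's proof is a one-line citation to the Lefschetz theorem in Fulton--Lazarsfeld, justified precisely by the observation that $X'$ has local complete intersection singularities, which is the same structural fact you lean on to invoke Hamm--L\^e/Goresky--MacPherson. The only cosmetic difference is that you iterate one general hyperplane section at a time while the paper applies the singular Lefschetz theorem for a codimension-$(d-2)$ general linear section in one stroke; both correctly stop at $\dim X'_Q = 2$, and your fallback to the stratified version is the right instinct since a general $Q$ does meet $\Sigma$ (in finitely many points), so one cannot avoid the singular locus.
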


   \begin{proof}
     Since $X$ has local complete
   intersection singularities, we can apply the Lefschetz theorem of
   \cite[p 28]{fl} to deduce the above isomorphism.
   \end{proof}

To simplify notation,  replace $\Sigma$  by
   its restriction to $X'_Q$.
 Then $\Sigma$ consists of a finite set of points. For
   each $p\in \Sigma$, let $L_p$ denote the link which is the boundary
   of a small contractible neigbourhood of $p$. The
 group $\pi_1(X')=\pi_1(X'_Q)$ is the quotient of $\pi=\pi_1(X'_Q-\Sigma)$ by the normal
 subgroup $N$ generated by $\bigcup_p \pi_1(L_p) $. For any group
 $\Gamma$, let
$$K(\Gamma)= \ker [\Gamma\to \widehat{\Gamma}]$$ 
where $\widehat{\Gamma}$ is the profinite completion.
This can also be characterized as the intersection of all finite index
subgroups, or as the  smallest normal subgroup for which $\Gamma/K(\Gamma)$  is residually
finite.

\begin{lemma}\label{lemma:GammaExists}
  There exists a normal subgroup of finite index $\Gamma\subseteq \pi$ such
  that $\pi_1(L_p)\cap \Gamma\subseteq K(\pi)$ for each $p\in \Sigma$.
\end{lemma}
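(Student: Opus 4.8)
The plan is to build $\Gamma$ one link at a time and then intersect. Fix $p \in \Sigma$. Since $X'_Q$ has a normal local complete intersection singularity at $p$ of the type appearing in Proposition~\ref{prop:disc} (a double cover branched over a node or cusp), the link $L_p$ is a well-understood $3$-manifold and $\pi_1(L_p)$ is a finitely generated group mapping to $\pi = \pi_1(X'_Q - \Sigma)$; call its image $\pi_1(L_p)$ as well. The key observation is that $K(\pi)$ is, by definition, the intersection of all finite-index normal subgroups of $\pi$, and the quotient $\pi/K(\pi)$ is residually finite. So the condition $\pi_1(L_p) \cap \Gamma \subseteq K(\pi)$ is equivalent to saying that the image of $\pi_1(L_p)$ in the finite group $\pi/\Gamma$ meets $\Gamma/\Gamma$ trivially — i.e. that the composite $\pi_1(L_p) \to \pi \to \pi/\Gamma$ is injective on $\pi_1(L_p)/(\pi_1(L_p) \cap K(\pi))$.

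First I would reduce to a purely group-theoretic statement: it suffices to find, for each $p$, a finite-index normal subgroup $\Gamma_p \trianglelefteq \pi$ such that $\pi_1(L_p) \cap \Gamma_p \subseteq K(\pi)$; then $\Gamma = \bigcap_p \Gamma_p$ works, since it is finite-index and normal, and $\pi_1(L_p) \cap \Gamma \subseteq \pi_1(L_p) \cap \Gamma_p \subseteq K(\pi)$ for every $p$. So fix $p$ and let $H = \pi_1(L_p)$, viewed inside $\pi$. Consider the image $\bar H$ of $H$ in the residually finite group $\pi/K(\pi)$: this is a finitely generated subgroup of a residually finite group, hence itself residually finite, and moreover $\bar H \cong H/(H \cap K(\pi))$. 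I would then invoke that finitely generated residually finite groups are \emph{locally extended residually finite} enough for the following: there is a finite-index normal subgroup $\bar\Gamma_p \trianglelefteq \pi/K(\pi)$ with $\bar\Gamma_p \cap \bar H = \{1\}$? That is false in general — the point is rather to use that $\bar H$ is \emph{finite}. Indeed, the links $L_p$ here have finite fundamental group: for the $A_n$ singularities $y^2 = x_1 x_2$ and $y^2 = x_1^2 - x_2^3$ (equivalently $A_1$, $A_2$) the links are lens spaces $L(2,1)$ and $L(3,1)$, so $\pi_1(L_p)$ is cyclic of order $2$ or $3$. Hence $H$ is finite, so its image in $\pi/K(\pi)$ is a finite subgroup, and since $\pi/K(\pi)$ is residually finite we can separate this finite subgroup from the identity: there is a finite-index normal $\bar\Gamma_p \trianglelefteq \pi/K(\pi)$ meeting it trivially. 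Pulling back to $\pi$ gives $\Gamma_p$ with $H \cap \Gamma_p \subseteq K(\pi)$, as wanted.

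The main obstacle — and the step I would be most careful about — is the claim that $\pi_1(L_p)$ is finite. This rests on identifying the singularity types: Proposition~\ref{prop:disc} tells us $D_1 \cap Q$ has only nodes and cusps, and the double cover $X'$ branched over such a curve acquires exactly the surface singularities $y^2 = x_1 x_2$ (type $A_1$) over a node and $y^2 = x_1^2 - x_2^3$ (type $A_2$) over a cusp, as recorded in the construction of $X'$; these are quotient (in fact Du Val) singularities, whose links are the corresponding spherical space forms, with finite cyclic fundamental group. Once finiteness of $\pi_1(L_p)$ is in hand, the residual finiteness of $\pi/K(\pi)$ (immediate from the definition of $K$) does the rest, and the finite intersection over the finitely many $p \in \Sigma$ completes the argument. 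I would also note that one should check $K(\pi) \subseteq \Gamma$ is not required — $\Gamma$ need not contain $K(\pi)$ — but since $K(\pi)$ is contained in every finite-index subgroup of $\pi$, in fact $K(\pi) \subseteq \Gamma$ automatically, which is a convenient consistency check.
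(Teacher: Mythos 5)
Your proposal is correct and follows essentially the same route as the paper: identify the singularities as $A_1$ or $A_2$ so that each $\pi_1(L_p)$ is finite cyclic ($\Z/2$ or $\Z/3$), then use residual finiteness of $\pi/K(\pi)$ to choose a finite-index normal subgroup avoiding the (finitely many) nontrivial elements in the images of the $\pi_1(L_p)$, and pull back. The paper handles all $p$ at once where you intersect subgroups $\Gamma_p$ one at a time, but this is a cosmetic difference.
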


\begin{proof}
As noted above, $\Sigma$ consists of a finite set of
singular points of type $A_1$ or $A_2$. These
singularities can also be described as quotients of $(\C^2,0)$ by an
action of $\Z/2\Z$ or
$\Z/3\Z$ \cite{durfee}. Therefore $\pi_1(L_p)$ must either be $\Z/2\Z$ or
 $\Z/3\Z$ and in particular finite. Since $\pi/K(\pi)$ is residually
 finite, we can find a finite index subgroup $\bar
\Gamma$  of it avoiding the nonzero elements $\im(\pi_1(L_p))$. Let
$\Gamma$ be the preimage.
\end{proof}

Let $\Psi \subseteq \pi_1(X')$ denote the image of $\Gamma$.

\begin{lemma}
\-
  \begin{enumerate}
\item[(a)] If $\bar \Gamma$ and $\bar N$ denote  the images of $\Gamma$ and $N$ in
  $\pi/K(\pi)$, then  $\bar \Gamma\cap \bar N=1$.
\item[(b)] $\Gamma/K(\pi)\cong \Psi/K(\pi_1(X'))$.
\end{enumerate}
\end{lemma}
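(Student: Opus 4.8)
The plan is to exploit the description $\pi_1(X') = \pi/N$, where $N$ is the normal subgroup of $\pi = \pi_1(X'_Q - \Sigma)$ generated by the finite groups $\pi_1(L_p)$, together with the defining property of $\Gamma$ from Lemma~\ref{lemma:GammaExists}, namely $\pi_1(L_p)\cap\Gamma \subseteq K(\pi)$ for each $p$. For part (a), write $K=K(\pi)$. Since $\Gamma$ is normal of finite index and $K$ is normal, $\bar\Gamma = \Gamma/K$ and $\bar N = NK/K$ are both normal subgroups of $\pi/K$, and I want their intersection to be trivial. The key observation is that $\pi/K$ is residually finite, so its normal subgroups are detected by finite quotients. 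First I would show $\bar N$ is generated, as a normal subgroup of $\pi/K$, by the images of the $\pi_1(L_p)$. Now $\Gamma$ was chosen precisely so that each nontrivial element of $\im\pi_1(L_p)$ in $\pi/K$ lies outside $\bar\Gamma$; equivalently $\pi_1(L_p)\cap\Gamma\subseteq K$. Because $\bar\Gamma$ is normal, the intersection $\bar\Gamma\cap\bar N$ is a normal subgroup of $\pi/K$ contained in the normal closure of a finite set of elements, none of which (other than the identity) is in $\bar\Gamma$; one then argues that a normal subgroup meeting each generating conjugacy class only in the identity must be trivial — here one uses that $\bar N$, being a normal closure of finite cyclic groups of order $2$ or $3$, is built from those conjugates, and that $\bar\Gamma \cap \bar N$, being normal, is a union of $(\pi/K)$-conjugacy classes. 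This is the step I expect to require the most care: passing from ``$\bar\Gamma$ avoids the generators of $\bar N$'' to ``$\bar\Gamma$ avoids all of $\bar N$'' is not automatic for arbitrary normal subgroups, and I would look to use the fact that $\bar N$ is generated by finite groups together with residual finiteness of $\pi/K$ to localize the argument to a finite quotient of $\pi$ in which both $\bar\Gamma$ and the images of the $\pi_1(L_p)$ can be seen simultaneously.

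For part (b), once (a) is in hand the argument is essentially diagram-chasing. We have the quotient map $\pi \onto \pi_1(X') = \pi/N$, under which $\Gamma$ maps onto $\Psi$ with kernel $\Gamma\cap N$. By (a), $\Gamma\cap N \subseteq K(\pi)$ (lift the equality $\bar\Gamma\cap\bar N = 1$ back to $\pi$: an element of $\Gamma\cap N$ maps into $\bar\Gamma\cap\bar N = 1$ in $\pi/K$, hence lies in $K$). Therefore the composite $\Gamma \to \Psi \to \Psi/K(\pi_1(X'))$ factors through $\Gamma/(\Gamma\cap K(\pi)) = \Gamma/K(\pi)$, giving a surjection $\Gamma/K(\pi)\onto \Psi/K(\pi_1(X'))$. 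For injectivity I would check that $K(\pi_1(X'))$ pulls back, under $\pi\onto\pi_1(X')$, to a subgroup of $\pi$ contained in $N\cdot K(\pi)$: indeed $K$ of a quotient is the image of $K$ of the total group together with the kernel, so $K(\pi_1(X'))$ is the image of $N\cdot K(\pi)$ in $\pi_1(X')$. Intersecting with $\Psi$ and using (a) again to kill the $N$-part modulo $K(\pi)$ shows that the preimage of $K(\pi_1(X'))$ in $\Gamma$ is exactly $\Gamma\cap N\cdot K(\pi) = (\Gamma\cap N)\cdot (\Gamma\cap K(\pi)) \subseteq K(\pi)$ — here the product decomposition of the intersection uses that modulo $K(\pi)$ the groups $\bar\Gamma$ and $\bar N$ are complementary by (a). Hence the surjection $\Gamma/K(\pi)\to\Psi/K(\pi_1(X'))$ has trivial kernel, proving (b).

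The only genuinely delicate point, then, is the normal-subgroup intersection argument in (a); everything downstream is formal manipulation of the functorial properties of $K(-)$ (it is contained in every finite-index subgroup; it behaves predictably under quotients) combined with the finiteness of the local fundamental groups $\pi_1(L_p)$, which is what makes Lemma~\ref{lemma:GammaExists} available in the first place.
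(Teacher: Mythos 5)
Your instinct that part (a) does \emph{not} follow immediately from Lemma~\ref{lemma:GammaExists} is correct, and this is in fact the crux. The paper's own proof consists of a single sentence for (a) (``follows immediately from Lemma~\ref{lemma:GammaExists}'') and a one-line kernel identification for (b); it does not engage with the gap you spotted. Lemma~\ref{lemma:GammaExists} only guarantees $\pi_1(L_p)\cap\Gamma\subseteq K(\pi)$, i.e.\ $\bar\Gamma$ misses the nontrivial elements of each $\im\pi_1(L_p)$, whereas $\bar N$ is the normal closure of these images, and a normal subgroup of finite index that misses a set of generators of a normal subgroup need not miss the normal subgroup itself. Your sketch for closing the gap (``localize to a finite quotient seeing $\bar\Gamma$ and all the $\im\pi_1(L_p)$ simultaneously'') is not carried out, and it cannot succeed without an additional input: since $\Gamma$ has finite index, $\bar\Gamma\cap\bar N=1$ forces $\bar N$ to embed in the finite group $(\pi/K(\pi))/\bar\Gamma$, so (a) secretly asserts that $\bar N$ is \emph{finite}. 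Neither your argument nor the paper's establishes that; it would have to be supplied (e.g.\ by strengthening Lemma~\ref{lemma:GammaExists} to choose $\Gamma$ after verifying finiteness of $\bar N$, or by a geometric argument about the local fundamental groups of $X'_Q$).

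For (b), the step you invoke --- ``$K$ of a quotient is the image of $K$ of the total group together with the kernel,'' i.e.\ the preimage of $K(\pi_1(X'))$ equals $N\cdot K(\pi)$ --- is false in general: a surjection from a free group onto a finitely generated non--residually finite group gives a counterexample, since $K$ of the source is trivial. The inclusion $N\cdot K(\pi)\subseteq$ (preimage) always holds; the reverse holds precisely when $\pi/(N\cdot K(\pi))$ is residually finite. Fortunately, this \emph{is} a consequence of (a): as noted above, (a) forces $\bar N$ finite, and a residually finite group modulo a finite normal subgroup is residually finite, so $\pi/(N K(\pi))\cong(\pi/K(\pi))/\bar N$ is residually finite and the preimage of $K(\pi_1(X'))$ is exactly $NK(\pi)$. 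With that, your kernel computation for (b) goes through and matches the paper's (unstated) one. So the net assessment: you correctly located the genuine gap in (a) which the paper elides; your proposed repair is incomplete; your (b) argument rests on a claim that is false in general but becomes valid once (a) is in hand, which the paper also implicitly assumes.
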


\begin{proof}
Item (a) follows immediately  from lemma~\ref{lemma:GammaExists}. 
The canonical map  $\Gamma/K(\pi)\to \Psi/K(\pi_1(X'))$ is clearly
surjective. The kernel is $\bar \Gamma\cap \bar N$. So (b) follows
from (a).  
\end{proof}

\begin{lemma}\label{lemma:malcev}
 The restriction  of $R_\rho$ to $\Gamma$ is the pull back of a representation of
  $\Psi$. 
\end{lemma}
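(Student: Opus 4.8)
The plan is to show that $R_\rho|_\Gamma$ factors through the quotient $\Gamma \to \Psi$, i.e. that $R_\rho|_\Gamma$ kills $\Gamma \cap N$, where $N = \ker(\pi \to \pi_1(X'))$ is the normal subgroup generated by the $\pi_1(L_p)$. Recall from the earlier discussion that $R_\rho$ extends over $\tilde U = {p'}^{-1}U$ in the sense that ${p'}^*V_\rho$ extends to a local system on $X' - \Sigma$; equivalently, after passing to the index-two subgroup $\pi_1(\tilde U) \subseteq \pi_1(U)$ and then to the quotient $\pi = \pi_1(X'_Q - \Sigma)$ (using the Lefschetz isomorphism $\pi_1(X') \cong \pi_1(X'_Q)$), the representation $R_\rho$ descends to a genuine representation of $\pi$. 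So there is no ambiguity in speaking of $R_\rho$ as a representation of $\pi$, and the task is purely group-theoretic: show the restriction to $\Gamma$ is trivial on the normal closure of $\bigcup_p (\pi_1(L_p) \cap \Gamma)$ inside $\Gamma$.

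First I would observe that $\Gamma$ is a normal subgroup of finite index in $\pi$, and that $\pi_1(X')$ is residually finite — because it is the fundamental group of a smooth projective variety, or more simply here because $\pi_1(X') \cong \pi_1(X'_Q)$ and $X'_Q$ is a surface with at worst quotient singularities — hence $K(\pi_1(X'))$ is the intersection of finite-index subgroups. The real point is Lemma~\ref{lemma:GammaExists}: $\pi_1(L_p) \cap \Gamma \subseteq K(\pi)$ for every $p$. Now $R_\rho$, being a \emph{finite-dimensional} representation over $\C$ coming from a family $M(\pi_1(U), N)$, has image a finitely generated linear group, which is residually finite by Malcev's theorem. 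Therefore $R_\rho$ itself factors through $\pi/K(\pi)$ — more precisely, $R_\rho$ kills $K(\pi)$, since any finite-index subgroup of the image pulls back to a finite-index subgroup of $\pi$, so $K(\pi) \subseteq \ker R_\rho$. (This is the ``Malcev'' of the lemma's name, and also the reason the subgroups $K(-)$ were introduced just before the statement.)

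Putting these together: $\pi_1(L_p) \cap \Gamma \subseteq K(\pi) \subseteq \ker R_\rho$ for each $p$, so $R_\rho|_\Gamma$ is trivial on each $\pi_1(L_p) \cap \Gamma$, hence on the normal subgroup of $\Gamma$ they generate; call this subgroup $N_\Gamma$. It remains to identify $\Gamma/N_\Gamma$ with $\Psi = \image(\Gamma \to \pi_1(X'))$, or at least to check that $R_\rho|_\Gamma$, which now descends to $\Gamma/N_\Gamma$, further descends to $\Psi$. Since $\pi_1(X') = \pi/N$, we have $\Psi = \Gamma N / N \cong \Gamma/(\Gamma \cap N)$, and $\Gamma \cap N$ is the normal closure in $\Gamma$ of $\bigcup_p (\pi_1(L_p) \cap \Gamma)$ \emph{together with} further elements of $N$; but any element of $\Gamma \cap N$ lies in $N$, and one checks — this is where the preceding lemma, giving $\bar\Gamma \cap \bar N = 1$, does its work — that $\Gamma \cap N \subseteq K(\pi) \subseteq \ker R_\rho$. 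Hence $R_\rho|_\Gamma$ is trivial on $\Gamma \cap N$ and descends to $\Psi$, which is the claim.

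The main obstacle, I expect, is the bookkeeping around the three layers of quotients — $\pi_1(U) \supseteq \pi_1(\tilde U) \twoheadrightarrow \pi = \pi_1(X'_Q-\Sigma) \twoheadrightarrow \pi_1(X'_Q) = \pi_1(X')$ — and making sure that ``$R_\rho$ as a representation of $\pi$'' is legitimate, i.e. that the extension of ${p'}^*V_\rho$ over $X'-\Sigma$ is exactly what lets one pass from $\pi_1(U)$ down to $\pi$ before restricting to $\Gamma$. Once that is set up cleanly, the argument is just: Malcev residual finiteness of linear groups forces $K(\pi) \subseteq \ker R_\rho$, and Lemma~\ref{lemma:GammaExists} was engineered precisely so that $\Gamma \cap N \subseteq K(\pi)$, so the restriction to $\Gamma$ factors through $\Gamma/(\Gamma\cap N) \cong \Psi$.
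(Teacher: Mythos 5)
Your proof is correct and follows essentially the same route as the paper's: both arguments turn on Mal\v{c}ev's theorem to conclude that $K(\pi)\subseteq\ker(R_\rho|_\Gamma)$, and then invoke the preceding lemma to pass to $\Psi$. The only cosmetic difference is that the paper cites part~(b) of the preceding lemma directly ($R_\rho|_\Gamma$ factors through $\Gamma/K(\pi)\cong\Psi/K(\pi_1(X'))$, hence through $\Psi$), whereas you unwind part~(a) to get $\Gamma\cap N\subseteq K(\pi)\subseteq\ker(R_\rho|_\Gamma)$ and then use $\Psi\cong\Gamma/(\Gamma\cap N)$ — logically the same thing. Your introduction of the auxiliary subgroup $N_\Gamma$ is an unnecessary detour, since you end up proving the stronger containment $\Gamma\cap N\subseteq K(\pi)$ anyway; you could drop that paragraph and go straight to the $\bar\Gamma\cap\bar N=1$ argument.
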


\begin{proof}
  By a theorem of Mal\v{c}ev \cite[p 309]{magnus},  any finitely generated
  linear group is residually finite. Therefore the restriction $\Res V_\rho= R_\rho|_\Gamma$
  factors through $\Gamma/K(\pi)\cong \Psi/K(\pi_1(X'))$.
\end{proof}

To finish the proof of theorem~\ref{thm:main}, observe that by the above
results, the restriction $\Res V_\rho$ comes from a $\Psi$-module $W_\rho$.
We can form
the induced $\pi_1(X')$-module $\Ind W_\rho$. This corresponds to  a nontrivial family
of semisimple local systems on $X'$ by lemma~\ref{lemma:induced}, which pulls back to a
nontrivial family on $X$. Therefore by lemma~\ref{lemma:decomp}
$d_M(\pi_1(X))>0$ for some $M$. By proposition~\ref{prop:betti},
$M>1$, and this concludes the proof.

\section{Problems}

I will end by  discussing  a few follow up  problems.

\begin{prob}
  Determine (a presentation for) the
fundamental group of $X$, constructed in section two, for some explicit choice of $Z\subset \PP^K$,
such as when it is an abelian variety.
\end{prob}
 
My hope is that this  will give a genuinely new and interesting
example of a group in the class of  fundamental
groups of smooth projective varieties.  It is clear that it would
differ from most of the standard known examples which 
either have positive first Betti number or are rigid in the sense that
all $d_N=0$. Furthermore $\pi_1(X)$ would be different from the examples
constructed in section one. Simpson's arguments \cite{simpson} show that in his
terminology  that $X$, with the local system $\mathcal{I}=q^*\Ind W_\rho$
above,  has the nonfactorization
property  $NF_1$. This means that  $\mathcal{I}$ is not  the pull back of a
local system on  a curve even if we  allow $X$ to be replaced by another variety mapping
surjectively to  it.  This will imply that $\pi_1(X)$
cannot contain the fundamental group of a
curve as a subgroup of finite index. 

\begin{prob}
  Find an example of a smooth projective variety with an infinite
  family of irreducible {\em unitary} representations which do not come from
  curves, i.e. that satisfy $NF_1$
\end{prob}

This is equivalent to asking for a variety with an infinite family of
stable vector bundles, with vanishing Chern classes, which do not come
from curves. This can be rephrased as asking for an infinite family of stable Higgs
bundles of the above type with zero Higgs fields.
Simpson's construction described above yields Higgs bundles with
nonzero Higgs fields. This is clear from his proof of theorem \ref{thm:simpson}.

For applications to the fundamental group, it suffices to stick with
dimension $d=2$.
One reason for allowing  $d> 2$ is that  I feel that
these varieties should be interesting from other points of view.

\begin{prob}
  Study the birational geometry of these varieties.
\end{prob}

For instance, although they have zero first Betti number,  I suspect
that they behave like varieties with large Albanese. One way to try
to make this precise is by using the notion of Shafarevich maps in the sense of
Campana and Koll\'ar \cite{kollar}. In most cases, I suspect that this
map should be birational. This would be an analogue of the Albanese
map being generically finite.


\begin{thebibliography}{99}

\bibitem[A]{arapura} D. Arapura {\em Higgs bundles, integrability, and
    holomorphic forms.} Motives, polylogarithms and Hodge theory, Part
  II (Irvine, CA, 1998), 
605-624, Int. Press Lect. Ser., 3, II, Int. Press, Somerville, MA, (2002)

\bibitem[DL]{dl} I. Dolgachev, A. Libgober, {\em On the fundamental
    group of the complement to a discriminant variety.}
 Algebraic geometry (Chicago, Ill., 1980), pp. 1- 25, 
Lecture Notes in Math., 862, Springer (1981)

\bibitem[D]{durfee} A. Durfee, {\em Fifteen characterizations of
    rational double points}, Enseign. Math. (2) 25 (1979), no. 1-2, 131-163. 
\bibitem[EV]{ev} H. Esnault, E. Viehweg, {\em Lectures on vanishing
    theorems}, DMV Seminar, 20. Birkh\"auser Verlag,  (1992)

\bibitem[FL]{fl} W. Fulton, R. Lazarsfeld, {\em Connectivity and its
    applications in algebraic geometry},  Algebraic geometry (Chicago, Ill., 1980), pp. 26-92, 
Lecture Notes in Math., 862, Springer (1981)

\bibitem[K]{kollar} J. Koll\'ar, {\em  Shafarevich maps and automorphic forms.} Princeton University Press, (1995)

\bibitem[LM]{lm} A. Lubotzky, A. Magid, {\em Varieties of
    representations of finitely generated groups},
  Mem. Amer. Math. Soc. 58 (1985), no. 336

\bibitem[M]{magnus} W. Magnus, {\em Residually finite groups}, Bull. Amer. Math. Soc. 75 (1969) 305-316

\bibitem[Se]{serre} J.P. Serre, {\em Sur la topologie des varii\'et\'es
    alg\'ebriques en caract\'eristique p.} (1958) Symp. Int. Top. Alg,
  Mexico, pp. 24-53 
\bibitem[S]{simpson} C. Simpson, {\em Some families of local systems
    over smooth projective varieties}, Annals of Math  138 (1993), no. 2, 337-425. 
\end{thebibliography}
\end{document}